
\documentclass[preprint,12pt,authoryear]{elsarticle}



\usepackage{amssymb}
\usepackage{amsthm} 
\usepackage{diagbox} 
\usepackage{multirow}
\usepackage{algorithm}
\usepackage{algorithmic}
\usepackage{color,amssymb,amsmath,amsthm,mathrsfs,natbib}

\usepackage{lineno}

\newtheorem{theorem}{Claim}
\begin{document}

\begin{frontmatter}


\title{a branch-and-price approach for the nurse rostering problem with multiple units}


\author[label1,label2,label3]{Wanzhe Hu}

\address[label1]{School of Economics and Management, Chongqing University of Posts and Telecommunications, Chongqing, China}

\address[label2]{Business School, Sichuan University, Chengdu, Sichuan, China}

\address[label3]{Key Laboratory of Big Data Intelligent Computing, Chongqing University of Posts and Telecommunications, Chongqing, China}

\address[label4]{Department of Industrial and Systems Engineering, Center for Applied Optimization, University of Florida, Gainesville, USA}

\author[label2]{Xiaozhou He}

\author[label2]{Li Luo\corref{cor1}}
\ead{luolicc@163.com}
\cortext[cor1]{Corresponding author}

\author[label4]{Panos M. Pardalos}

\begin{abstract}
In this paper, we study the nurse rostering problem that considers multiple units and many soft time-related constraints. An efficient branch and price solution approach that relies on a fast algorithm to solve the pricing subproblem of the column generation process is presented. For the nurse rostering problem, its pricing subproblem can be formulated as a shortest path problem with resource constraints, which has been the backbone of several solutions for several classical problems like vehicle routing problems. However, approaches that perform well on these problems cannot be used since most constraints in the nurse rostering problem are soft. Based on ideas borrowed from global constraints in constraint programming to model rostering problems, an efficient dynamic programming algorithm with novel label definitions and dominating rules specific to soft time-related constraints is proposed. 
In addition, several acceleration strategies are employed to improve the branch and price algorithm. Computational results on instances of different sizes indicate that the proposed algorithm is a promising solution for the nurse rostering problem with multiple units. 
\end{abstract}



\begin{keyword}
nurse rostering problem \sep multiple units \sep branch and price
\end{keyword}

\end{frontmatter}

\section{Introduction}
The Nurse Rostering Problem (NRP), also known as the nurse scheduling problem, aims to construct a high-quality roster for a set of nurses over a given scheduling horizon. 
A roster is a collection of individual schedules for nurses, each of which specifies an ordered list of shift sequences (e.g., early, late, or night) and day-off periods \citep{brucker2010shift}.
A number of studies on the implications of poor nurse scheduling show that poor scheduling is closely related to poor patient care, poor nurse morale, reduced patient satisfaction, and ultimately poor hospital performance \citep{clark2015rescheduling}.
The real-world NRP frequently involves a large number of intricate constraints, and many of its variants are classified as NP-hard \citep{brucker2011personnel, smet2016polynomially}.
Due to the importance and complexity, NRPs have been extensively studied in the past several decades, resulting in a considerable amount of literature on the modeling and solving methods to various NRPs. For literature reviews the reader is referred to \citet{burke2004state}, \citet{cheang2003nurse}, and \citet{ngoo2022survey}.

The First and Second International Nurse Rostering Competitions (INRC-I and INRC-II), which were held in 2010 and 2015 respectively \citep{ceschia2019second}, provided two specific problem formulations and a large number of available instances, which makes it easier for researchers to compare their algorithms with others. Consequently, most papers published recently have sought to develop better solutions to these problems based on the corresponding instances. For example, \citet{santos2016integer}, \citet{zheng2017simple}, and  \citet{rahimian2017hybrid} proposed new models or algorithms to solve problems similar to the one for the INRC-I. \citet{mischek2019integer}, \citet{legrain2020online}, and \citet{kheiri2021hyper} reported their solutions to the multi-stage NRP defined by the INRC-II. Additionally, the benchmark instances presented by \citet{curtois2014computational} have also been widely used to test algorithms. \citet{knust2019simulated}, \citet{turhan2020hybrid}, 
and \citet{chen2022neural} are some of the researchers who have used these instances. 

All these problems and instances mentioned above deal with general NRPs that address the work assignments of nurses within one unit, so the decision variables of their models consist in assigning nurses to different shifts each day. 
However, there are some situations where multiple units are involved and additional unit assignment decisions must be made.    Centralized scheduling, where the units with similar nurse requirements are scheduled centrally to make better use of nurse resources, is one such example \citep{burke2004state}. Another is the rostering problem of float nurses, who do not have a home unit and work in different units as needed
\citep{wright2010strategies, maenhout2013integrated}. In these cases, nursing managers are required to deal with the Nurse Rostering Problem with Multiple Units (NRPMU), which takes into account additional unit allocation decisions.

Most NRPs are characterized by a large number of time-related constraints for individual schedules. Representative constraints include limiting the maximum and minimum numbers of a specific roster item (e.g., assignments or days-off) within the scheduling horizon and restricting the maximum and minimum numbers of consecutive occurrences of specific roster items, which are referred to as “ranged” counter constraints and series constraints, respectively \citep{smet2016polynomially}. Moreover, these constraints can be hard or soft ones. To generate a feasible roster, hard constraints must be satisfied while soft constraints can be violated
with penalties. 
To our knowledge, only a few papers deal with the NRPMU with counter and series constraints. 
\citet{wright2013centralized}, \citet{leksakul2014nurse}, and \citet{fugener2018mid} proposed models involving multiple units, yet only a minority of time-related constraints 
are considered. \citet{turhan2022mat} presented a model that considers multiple units and most time-related constraints. They treated most time-related constraints as hard constraints. \citet{maenhout2013integrated} proposed an integrated nurse staffing and scheduling method which involves multiple units and a number of nurse-specific constraints and objectives. In contrast, this paper focuses on the scheduling phase. The NRPMU under study considers a range of common soft counter and series constraints as well as novel soft constraints regarding unit allocation decisions.


Although a variety of methods have been proposed to solve NRPs, such as exact methods, heuristics, and hybridizations of these techniques, branch and price ($B\&P$) has been proven to be very competitive \citep{burke2014new}. It is well-known that $B\&P$ is a branch and bound method in which Column Generation (CG) techniques are used to get the lower bound at each node of the search tree. CG considers two problems: the master problem, which is a linear programming problem, and the pricing subproblem, which in NRPs is generally formulated as a Shortest Path Problem with Resource Constraints (SPPRC), an NP-hard combinatorial optimization problem \citep{irnich2005shortest}. Apparently, the efficiency of the algorithm to solve the pricing subproblem is at the core of the CG and $B\&P$. 
Indeed, the dynamic programming (DP) method has been extensively applied to solve the SPPRC for the vehicle routing problems (VRPs), and several  acceleration techniques are proposed \citep{costa2019exact}. 
Nevertheless, due to the ranged counter and series constraints, existing DP algorithms for VRPs cannot be directly used to solve the SPPRC for NRPs.

Multiple approaches based on $B\&P$ or CG to solve NRPs have been reported during the last several decades. Some methods were proposed to tackle the pricing subproblem.  One of the earliest attempts to solve NSPs using $B\&P$ was made in 1998 \citep{jaumard1998generalized}.
They presented their $B\&P$ method to solve the NRP that takes demand coverage and nurse preferences into account. They formulated the pricing subproblem as a SPPRC and proposed an efficient two-phase algorithm. 
%
\citet{bard2005preference} reported a CG-based method to address the NRP subject to demand requirements and personel considerations, which they called pereference scheduling. Columns were generated using a heuristic.
To reduce the instability, 
\citet{purnomo2007cyclic} studied the cyclic preference scheduling problem and developed a $B\&P$ algorithm. The pricing subproblem was solved by CPLEX as an integer programming problem.
\citet{maenhout2010branching} presented an exact $B\&P$ algorithm for solving the NRP with multiple objectives. A two phase approach was implemented to solve the pricing subproblem. If the heuristic in the first phase failed, an exact dynamic programming method was used.
\citet{strandmark2020first} developed a CG based heuristic that is able to produce good solutions quickly for large NRP instances. The pricing subproblem was modelled as a SPPRC and solved using a DP-based heuristic.
\citet{guo2022column} investigated the NRP that accommodates overtime. A CG-based heuristic with stabilization was proposed, and the pricing subproblem was solved using CPLEX.
\color{black}

The approaches mentioned above are mainly developed to deal with NRPs with hard counter or series constraints. In contrast, there are only a few CG-based solutions published to address NRPs with soft ones. \citet{he2012constraint} formulated the pricing subproblem with soft constraints as a constraint programming model, whose main task was to generate good individual schedules rather than optimal ones. 
\citet{burke2014new} proposed a DP algorithm to solve the SPPRC for the NRP with soft counter and series constraints. The dominance rules are adapted directly from the methods applied for VRPs. According to our analysis given in section \ref{sec_dominance}, these dominance rules cannot deal effectively
with the soft counter and series constraint, which leads to a great number of labels to be extended, thereby degrading the algorithm performance.
It is worth noting that Omer and Legrain \citep{legrain2020rotation, omer2023dedicated} have reported two $B\&P$ methods to solve NRPs. Their latest research \citep{omer2023dedicated}, a
preprint article, is online recently while we are conducting this research. Dedicated algorithms for the SPPRC with soft and hard constraints involved in the NRPs are presented. Although our method shares some similarities with theirs, the resulting dominance rules are indeed different. In addition, we provide some different ideas to define and update labels. Main differences between their method and ours are given in section \ref{subsec_differ}.

The aim of this paper is to address the NRPMU involving several soft counter and series constraints, which are often encountered in real-world NRPs. 
%
The main contributions of this paper are twofold.
First, we investigate the NRPMU with a number of soft time-related constraints, which has been dealt with by few papers. Objectives and constraints specific to NRPMU are taken into account to match nurse skills with units and
balance the distribution of the nursing workload for each unit among available nurses.
Second, we present a $B\&P$ approach to solve our NRPMU. An efficient yet optimal DP algorithm is developed to tackle the SPPRC with soft counter and series constraints. New label definitions and dominance rules are introduced. These methods can be generalized to handle general NRPs. In addition, we propose a tailored accelerating strategy for the NRPMU. In contrast to traditional methods that use dominance rules within one node, we apply our dominance rules across multiple nodes.

The remainder of this paper is organized as follows.
Section \ref{problem formulation} formally describes  the master problem and pricing subproblem for the NRPMU under consideration. Section \ref{sec_solution} presents the proposed $B\&P$ algorithm, with a particular emphasis on describing the label definitions and dominating rules we introduced. In section \ref{sec_experiments}, computational experiments are conducted on a number of instances. Section \ref{sec_conclusion} concludes this paper. 

\section{Problem formulation}
\label{problem formulation}


Given a set of nurses and the coverage requirements for each day, unit, and shift during a scheduling period, the NRPMU is defined as the process of determining a set of individual schedules according to a set of objectives and constraints. 
An individual schedule specifies one nurse’s rest days and work days. For each work day, the nurse is assigned to a unit (workplace) and a shift (working period). 
The collection of individual schedules for all nurses is called a roster. 


The NRPMU involves many constraints arising from work regulations and nurses’ preferences. 
Most constraints are soft and the objective is to find a feasible roster with the minimum penalty. Since multiple units are involved, the NRPMU is characterized by additional decisions and constraints.  
First, it is essential to match nurses with units that require different skills. For each unit, the nurses are divided into three categories: nurses with preferred skills, with required skills, and without required skills. Nurses without required skills cannot be assigned to corresponding units, 
and assigning nurses with required but not preferred skills results in penalties.
Furthermore, to balance the distribution of the nursing workload for each unit among nurses, soft constraints that limit the minimum and maximum number of days worked in one unit are introduced for each nurse. 

Here, we first detail the NRPMU’s constraints and then present the mathematical formulations in the following subsections. We assume that the scheduling period always consists of a whole number of weeks and starts on a Monday. 

Hard constraints are:
\begin{enumerate}
\item One nurse can be assigned a maximum of one unit and one shift per day.
\item Forbidden shift rotations. A shift type cannot be followed by some shift types on the next day. For our instances, the following shift rotations are not allowed, namely, Late-Early, Night-Early, and Night-Late.
\item Required skill. One cannot assign a nurse to a unit for which he/she does not have corresponding required/preferred skills.

\end{enumerate}

Soft constraints are:
\begin{enumerate}

\item Maximum and minimum number of working days for each nurse.
\item Maximum and minimum number of days worked in one unit for each nurse.
\item Maximum number of working weekends for each nurse.
\item Maximum and minimum number of consecutive working days for each nurse. 
\item Maximum and minimum number of consecutive rest days for each nurse.
\item Day on/off request. Requests by the nurses to work or not to work on specific days.
\item Shift on/off request. Requests by the nurses to work or not to work on specific shifts of certain days.
\item Preferred skill. Assigning nurses without preferred skills to corresponding units leads to penalty.
\item Cover requirements. If the number of nurses assigned to a shift of a unit on one day is less than its requirement, penalty occurs.
\end{enumerate}
\subsection{The master problem}
This section introduces the master problem that is involved in the CG process to solve the NRPMU. Notations used for the model is given as follows.


\noindent\textbf{Sets}:
\begin{description}
    \item[$N$] Set of nurses, indexed by $n$
    \item[$D$] Set of days, indexed by $d$
    \item[$U$] Set of units, indexed by $u$
    \item[$S$] Set of shifts, indexed by $s$
    \item[$L_n$] Set of feasible individual schedules for nurse $n$, indexed by $l$
\end{description}

\noindent\textbf{Parameters}:
\begin{description}
    \item[$c_{nl}$] Sum of the penalties of an individual schedule $l \in L_n$, which results from violating soft constraints
    \item[$r_{dus}$] the number of nurses required in shift $s$ and unit $u$ on day $d$
    \item[$a_{nldus}$] 1 if nurse $n$ is assigned to shift $s$ and unit $u$ on day $d$ in individual schedule $l$, 0 otherwise
    \item[$p_{dus}^{under}$] Penalty for understaffing in shift $s$ and unit $u$ on day $d$
\end{description}

\noindent\textbf{Decision Variables}:
\begin{description}
    \item[$x_{nl}$] 1 if individual schedule $l$ is selected by nurse $n$, 0 otherwise
    \item[$v_{dus}$] The number of nurses understaffed in shift $s$ and unit $u$ on day $d$
\end{description}

The NRPMU is formulated as the following integer linear programming model.

\begin{align}
\label{obj1} \text{(MP)} \quad \min \quad  & \sum_{n \in N} \sum_{l \in L_n} x_{nl} \cdot c_{nl} + \sum_{d \in D} \sum_{u \in U} \sum_{s \in S} p_{dus}^{under} \cdot v_{dus} \\
\label{cons1} \text{s.t.}  \quad  & \sum_{n \in N} \sum_{l \in L_n} x_{nl} \cdot a_{nldus} + v_{dus} \ge r_{dus}, \quad\forall d \in D, u \in U, s \in S \\
\label{cons2} & \sum_{l \in L_n} x_{nl} = 1, \quad\forall n \in N \\
\label{cons3} & x_{nl} \in \{0,1\}, \quad\forall n \in N, l \in L_n \\
\label{cons4} & v_{dus} \in \mathbb{N}, \quad\forall d \in D, u \in U, s \in S
\end{align}

The objective is to minimize the sum of penalties caused by all nurses' schedules and understaffing in each shift and unit during the scheduling period. Constraints (\ref{cons1}) identify the number of nurses below the cover requirement $r_{dus}$ for each shift and unit on each day. Constraints (\ref{cons2}) enforce that one individual schedule must be selected for each nurse. Constraints (\ref{cons3}) and (\ref{cons4}) define the decision variables. Note that the variables $x_{nl}$ are defined as being integer so as to avoid constraints $x_{nl} \le 1$ in the linear relaxation of (\ref{obj1}-\ref{cons4}). Moreover, due to constraints (\ref{cons2}), it is obvious that any solution with $x_{nl} > 1$ would be unfeasible.

Let us call the linear relaxation of (\ref{obj1}-\ref{cons4}) the Master Problem (MP), and it has been proven that the MP can provide a good lower bound for the branch and bound algorithm. However, since the size of each set $L_n$ grows exponentially with the number of shifts, units, and days, it is not tractable to consider all possible individual schedules in the model explicitly. In practice, we work with the MP with a small subset $ L^{'}_n  \in L_n$  for each nurse, which is called the Restricted Master Problem (RMP). 

Let $\lambda_{dus}$ and $\lambda_n$ be the dual variables for the constraints (\ref{cons1}) and (\ref{cons2}), respectively.
Provided that we have a RMP with $\lambda_{dus}^{'}$ and $\lambda_n^{'}$ being the optimal solutions to its dual program, the reduced cost of individual schedule $l$ for nurse $n$ in the simplex method for MP can be denoted as 

$rc_{nl} = c_{nl} - \sum_{d \in D} \sum_{u \in U} \sum_{s \in S} a_{nldus} \cdot \lambda_{dus} - \lambda_{n}$. If individual schedules with negative $rc_{nl}$ can be found, we add them, so-called columns, to the RMP and re-optimize the RMP. Otherwise, the MP has been solved optimally. This iterative process is known as the CG method, and the problem used to generate individual schedules with negative reduced cost is called the pricing subproblem.

\subsection{The pricing subproblem}
Since nurses may have different contract requirements and individual requests, they are treated to be heterogeneous in the model. Consequently, the procedure to generate columns is composed of $|N|$ pricing subproblems, each of which aims to find the individual schedule $l$ with the minimum reduced cost $\overline{rc}_{nl}$ for nurse $n$. It is clear that RMP only takes into account cover requirements explicitly. In contrast, the other constraints are dealt  with implicitly by the definition of feasible individual schedules, which are generated by pricing subproblems. In order to describe these constraints precisely, we present an integer programming model to formulate the pricing subproblem for nurse $n$, which is based on the modeling method introduced by \citet{santos2016integer}. The notations not mentioned above are defined as follows:

\noindent\textbf{Parameters}:
\begin{description}
  \item[$p^{dnu}$] penalty for violating the constraint limiting the maximum number of working days 
  \item[$p^{dnl}$] penalty for violating the constraint limiting the minimum number of working days 
  \item[$p^{udnu}_u$] penalty for violating the constraint limiting the maximum number of days worked in unit $u$
  \item[$p^{udnl}_u$] penalty for violating the constraint limiting the minimum number of days worked in unit $u$
  \item[$p^{wn}$] penalty for violating the constraint limiting the maximum number of working weekends
  \item[$\Pi$] set of all ordered pairs $(d_1,d_2) \in D \times D$ with $d_1 \leq d_2$ 
  \item[$p_{d_1 d_2}^{don}$] pre-computed penalty for consecutive work from day $d_1$ to day $d_2$
  \item[$p_{d_1 d_2}^{doff}$] pre-computed penalty for consecutive rest from day $d_1$ to day $d_2$
  \item[$p_d^{don}$] penalty for the request to work on day $d$
  \item[$p_d^{doff}$] penalty for the request not to work on day $d$
  \item[$p_{ds}^{son}$] penalty for the request to work on shift $s$ of day $d$ 
  \item[$p_{ds}^{soff}$] penalty for the request not to work on shift $s$ of day $d$
  \item[$p_{du}^{np}$] penalty if unit $u$ is assigned to the nurse without corresponding preferred skills.
  \item[$\eta_u$] 1 if the nurse has preferred skills for unit $u$ and 0 otherwise
  \item[$W$] set of weekends in the scheduling period
  \item[$D_i$] set of days in the $i$-th weekend, $i \in W$
  \item[$\alpha_n^l$, $\alpha_n^u$] the minimum and maximum number of working days for nurse $n$
  \item[$\alpha_n^{ul}$, $\alpha_n^{uu}$] the minimum and maximum number of days worked in unit $u$ for nurse $n$
  \item[$\beta_n$] the maximum number of working weekends for nurse $n$
  \item[$S^{forbidden}_s$] set of shifts that cannot follow shift $s$
  \item[$N_u^{require}$] set of nurses with required or preferred skills for unit $u$
\end{description}

\noindent\textbf{Decision Variables}:
\begin{description}
  \item[$q^{dnu}$] Total number of working days above the maximum number limit 
  \item[$q^{dnl}$] Total number of working days below  the minimum number limit
  \item[$q^{udnu}_u$] Total number of working days above the maximum number limit in unit $u$ 
  \item[$q^{udnl}_u$] Total number of working days below the minimum number limit in unit $u$
  \item[$f$] Total number of working weekends above the maximum number limit
  \item[$w_{d_1 d_2}$] 1 if the nurse works  from day $d_1$ until day $d_2$ and 0 otherwise
  \item[$r_{d_1 d_2}$] 1 if the nurse rests from day $d_1$ until day $d_2$ and 0 otherwise
  \item[$\theta_{dus}$] 1 if the nurse is assigned to shift $s$ and unit $u$ on day $d$ and 0 otherwise
\end{description}

The integer programming model for the pricing subproblem for nurse $n$:
\begin{equation}
    \overline{rc}_{nl} = min \quad F_r + F_o + F_d + F_s + F_p - \sum_{d \in D} \sum_{u \in U} \sum_{s \in S} \theta_{dus} \cdot \lambda_{dus} - \lambda_{n} \label{spObj}
\end{equation}
\begin{equation}
    F_r = p^{dnu} \cdot q^{dnu} + p^{dnl} \cdot q^{dnl} + p^{udnu}_u \cdot q^{udnu}_u + p^{udnl}_u \cdot q^{udnl}_u + p^{wn} \cdot f
\end{equation}
\begin{equation}
    F_o = \sum_{d_1 d_2 \in \Pi} (p_{d_1 d_2}^{don} \cdot w_{d_1 d_2} + p_{d_1 d_2}^{doff} \cdot r_{d_1 d_2})
\end{equation}
\begin{equation}
    F_d = \sum_{d \in D} p_d^{don} \cdot (1-\sum_{s \in S} \sum_{u \in U} \theta_{dus}) + 
     \sum_{d \in D} p_d^{doff} \cdot \sum_{s \in S} \sum_{u \in U} \theta_{dus}
\end{equation}
\begin{equation}
    F_s = \sum_{d \in D} \sum_{s \in S} p_{ds}^{son} \cdot (1- \sum_{u \in U} \theta_{dus}) + \sum_{d \in D} \sum_{s \in S} p_{ds}^{soff} \cdot  \sum_{u \in U} \theta_{dus}
\end{equation}
\begin{equation}
    F_p = \sum_{d \in D} \sum_{u \in U} p_{du}^{np} \cdot (1-\eta_u) \cdot \sum_{s \in S} \theta_{dus}
\end{equation}
subject to
\begin{equation}
    \sum_{u \in U} \sum_{s \in S} \theta_{dus} \le 1, \forall d \in D
    \label{spcons1}
\end{equation}
\begin{equation}
    \sum_{u \in U} \sum_{s \in S} \theta_{dus} = \sum_{[d_1,d_2] \in \Pi: d \in [d_1,d_2]} w_{d_1 d_2}, \forall d \in D
    \label{spcons2}
\end{equation}
\begin{equation}
     \sum_{d_1 \in \{1,\cdots,d\}} w_{d_1 d} + \sum_{d_2 \in D: d_2 \ge d+1} w_{(d+1)d_2} \le 1, \forall d \in D
     \label{spcons3}
\end{equation}
\begin{equation}
    \sum_{u \in U} \sum_{s \in S} \theta_{dus} = 1 - \sum_{[d_1,d_2] \in \Pi: d \in [d_1,d_2]} r_{d_1 d_2}, \forall d \in D
    \label{spcons4}
\end{equation}
\begin{equation}
     \sum_{d_1 \in \{1,\cdots,d\}} r_{d_1 d} + \sum_{d_2 \in D: d_2 \ge d+1} r_{(d+1)d_2} \le 1, \forall d \in D
     \label{spcons42}
\end{equation}
\begin{equation}
    \alpha_n^l - q^{dnl} \le \sum_{d \in D} \sum_{u \in U} \sum_{s \in S} \theta_{dus} \le \alpha_n^u + q^{dnu}
    \label{spcons5}
\end{equation}
\begin{equation}
    \alpha_{nu}^{ul} - q^{udnl}_u \le \sum_{d \in D} \sum_{s \in S} \theta_{dus} \le \alpha_{nu}^{uu} + q^{udnu}_u, \forall u \in U
    \label{spcons6}
\end{equation}
\begin{equation}
    y_i \ge \sum_{u \in U} \sum_{s \in S} \theta_{dus}, \forall i \in W, d \in D_i
    \label{spcons7}
\end{equation}
\begin{equation}
    y_i \le \sum_{d \in D_i} \sum_{u \in U} \sum_{s \in S} \theta_{dus}, \forall i \in W
    \label{spcons8}
\end{equation}
\begin{equation}
    \sum_{i \in W} y_i \le f + \beta_n
    \label{spcons9}
\end{equation}
\begin{equation}
    \sum_{u \in U} \theta_{dus} + \sum_{u \in U} \theta_{(d+1)u\underline{s}} \le 1, \forall d \in D, s \in S, \underline{s} \in S^{forbidden}_s 
    \label{spcons10}
\end{equation}
\begin{equation}
    \theta_{dus} = 0, \forall d \in D, u \in U, s \in S \quad if \quad n \notin N_u^{require}
    \label{spcons11}
\end{equation}
\begin{equation}
    q^{dnu}, q^{dnl}, f \in \mathbb{N}
     \label{spcons12}
\end{equation}
\begin{equation}
    q^{udnu}_u,q^{udnl}_u \in \mathbb{N}, \forall u \in U
\end{equation}
\begin{equation}
    w_{d_1 d_2}, r_{d_1 d_2} \in \{0,1\}, \forall (d_1, d_2) \in \Pi
\end{equation}
\begin{equation}
    \theta_{dus} \in \{0,1\}, \forall d \in D, u \in U, s \in S  \label{spcons13}
\end{equation}

In the objective (\ref{spObj}), the total penalty $c_{nl}$ for individual schedule $l$ is composed of $F_r$, $F_o$, $F_d$, $F_s$, and $F_p$, which results from violating soft constraints 1-3, 4-5, 6, 7, and 8, respectively. Constraints (\ref{spcons1}), (\ref{spcons10}), and (\ref{spcons11}) ensure that the hard constraints must be satisfied. Constraints from (\ref{spcons2}) to (\ref{spcons42}) link variables $\theta_{dus}$ with $w_{d_1 d_2}$ and $r_{d_1 d_2}$. Constraints (\ref{spcons5}) model the maximum and minimum number of working days in the scheduling period. Constraints (\ref{spcons6}) limit the maximum and minimum number of days worked for each unit. Constraints (\ref{spcons7}) and (\ref{spcons8}) link variables $\theta_{dus}$ with $y_i$ and guarantee that a working weekend is defined as a weekend with at least one working day. Constraints (\ref{spcons9}) restrict the maximum number of working weekends. The variables are defined by constraints from (\ref{spcons12}) to (\ref{spcons13}).

\section{Solution approach}
\label{sec_solution}
Although the CG method is able to solve the MP optimally, it is not guaranteed that the resulting solution is integral. The $B\&P$ is a hybrid of branch and bound and CG methods, where branching occurs when the optimal solution to the MP is not integral. For the CG method is employed to solve the MP with additional branching constraints at each node, the performance of the $B\&P$ depends heavily on the CG solutions. 



As stated above, the performance of the algorithm to solve pricing subproblems is crucially important.
Based on the work by \citet{burke2014new} and constraint programming solutions for rostering problems \citep{demassey2006cost, metivier2009solving}, an exact yet efficient DP algorithm for the NRPMU is presented in this paper. In the following, the label definitions and dominance rules are described in detail. 

\subsection{The label definition}
\label{sec_LableDefinition}
The SPPRC for the NRPMU is defined on a directed graph $G = (V, A)$, where $A$ is the set of arcs and $V = \{ v_0, ..., v_n, v_{n+1}\} $ is the set of nodes. Vertexes $v_0$ and $v_{n+1}$ are the source and sink nodes, respectively. Each of the other nodes represents a working day in shift $s$ and unit $u$ on day $d$ or a day off. Assuming that there are the same shift types in each unit, $n$ is equal to $|D| \times (|U| \times |S| + 1) $. Figure \ref{graph} gives an example graph for a NRPMU instance with two units, which are indexed by 1 and 2 respectively, and two shifts (Day and Night). Note that $A$ can only contain three types of arcs: arcs from the source node to nodes on the first day, arcs from nodes on the last day to the sink node, and arcs from nodes on one day to nodes on the next day. A path from the source node to the sink node represents an individual schedule for one nurse. 

\begin{figure}[htbp]
\centering
\includegraphics[width = .8\textwidth]{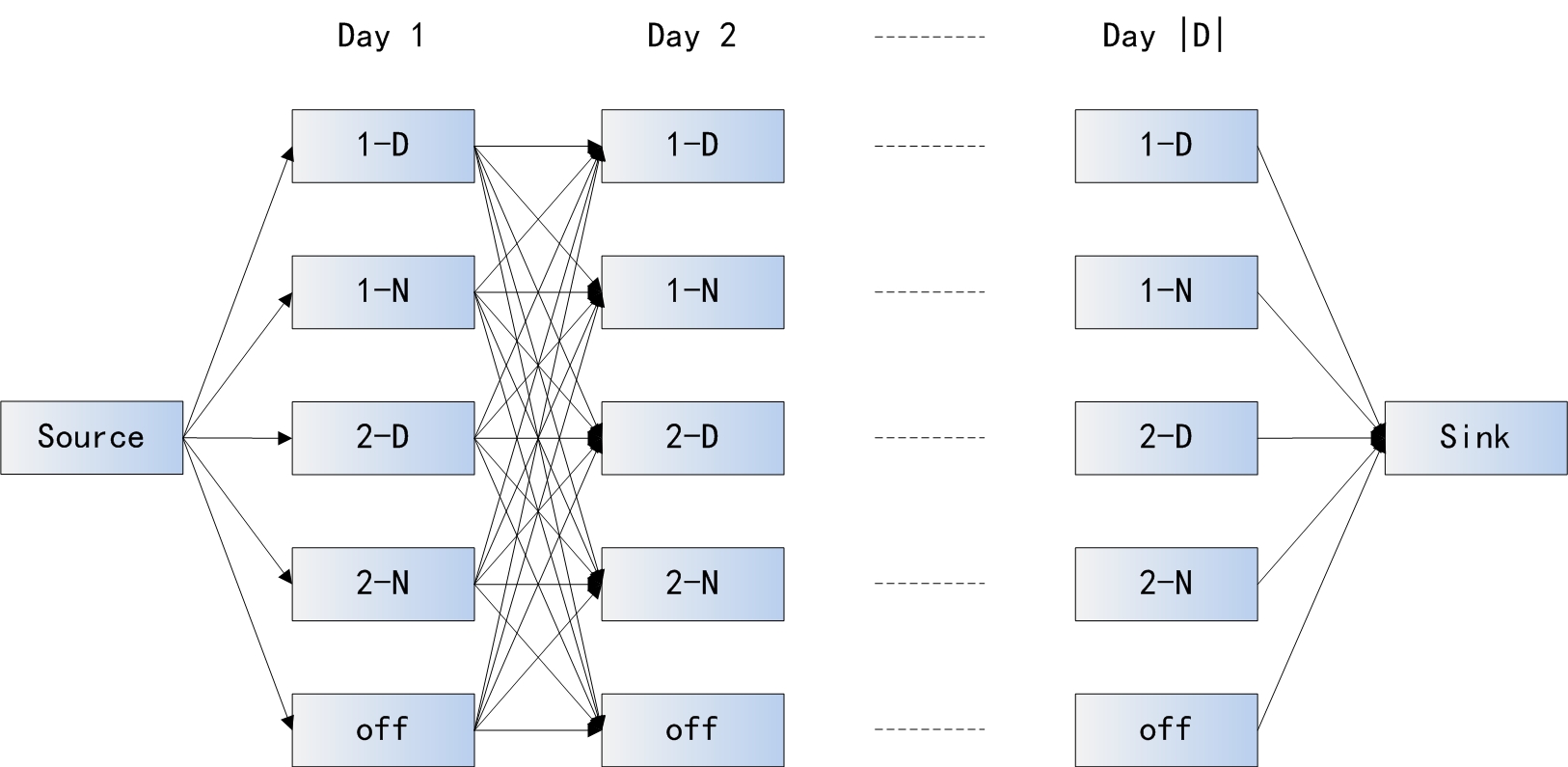}
\caption{Example graph for the SPPRC for a NRPMU instance}
\label{graph}
\end{figure}

For the classical SPPRC, with each arc $(v_i, v_j) \in A$ are associated a cost $c_{ij}$ and a set of values denoting resource consumption $\{ d_{ij}^r| r \in RS \}$, where $RS$ is the set of resources. Assuming that only capacity constraints are involved, there is a maximum consumption limit for each resource. Given a partial path $P_i$ from the source node to a node $v_i$, the cost $C_i$ and the quantity $R_i^r$ of a resource $r$ consumed by the path are calculated by $C_i = \sum_{(v_i, v_j) \in P_i} c_{ij}$ and $R_i^r = \sum_{(v_i, v_j) \in P_i} d_{ij}^r$, respectively. The objective is to find a minimum cost path from the source node to the sink node that does not exceed all maximum resource consumption limits. Dynamic programming algorithms for the SPPRC construct paths by iteratively extending a partial path at one node into all its successor nodes. In the beginning, there is only one partial path that solely consists of the source node. The algorithm terminates once no new paths are created. Dominance rules are introduced to identify and discard non-useful paths so that the number of paths to be extended is as small as possible. In fact, the label is introduced in the algorithm to stand for a path and record its cost and resource consumption. As a consequence, the dynamic programming algorithm for the SPPRC is also called the labeling algorithm.

Let us recall that the objective of the pricing subproblem is to find the individual schedule with the minimum reduced cost $\overline{rc}_{nl} = c_{nl} - \sum_{d \in D} \sum_{u \in U} \sum_{s \in S} a_{nldus} \cdot \lambda_{dus} - \lambda_{n}$ for each nurse. Since $\lambda_{n}$ is a constant for nurse $n$, it is equivalent to finding the complete path with the minimum $c_{nl} - \sum_{d \in D} \sum_{u \in U} \sum_{s \in S} a_{nldus} \cdot \lambda_{dus}$ in the graph constructed for the NRPMU. Hence, we define as $c_{nl} - \sum_{d \in D} \sum_{u \in U} \sum_{s \in S} a_{nldus} \cdot \lambda_{dus}$ the cost of one complete path in the SPPRC we created for the NRPMU. The dual variables $ \lambda_{dus} $ can be easily handled by adding them to the costs of corresponding arcs. The difficulty lies in computing $c_{nl}$, which results from violations of soft constraints.

There are eleven constraints involved in the pricing subproblem for each nurse. Some of these constraints can be tackled by the SPPRC easily. Since a path contains only one node on the same day, hard constraints 1 are respected automatically. Hard constraints 2 and 3 can be satisfied by excluding corresponding arcs from the arc set $A$. Soft constraints 6 to 8 can be dealt with by associating suitable costs with arcs toward the nodes denoting day/shift on/off or units for which the nurse does not have preferred skills. In contrast, it is hard to address the other soft constraints, which are special ones arising out of the NRPMU or other NRP related problems. 

According to \citet{smet2016polynomially}, these constraints are classified into ranged counter constraints (soft constraints 1-3) and series constraints (soft constraints 4 and 5).
\citet{burke2014new} presents an efficient $B\&P$ algorithm for the NRP, where they treat the counter constraints as general constraints called "Workload", and introduce regular expression constraints to describe the series constraints. Regular expressions are commonly used in Computer Science to specify text patterns to be matched. If we treat the individual schedule as the search text, the series constraint can be handled by forbidding or limiting the occurrence of specified patterns. Several examples are given to illustrate how to express series constraints by regular expression constraints. Pricing subproblems are solved by a dynamic programming algorithm where labels they called partial patterns are used to count the number of occurrences of the item limited by counter constraints, such as working days and rest days, and the number of matches of each regular expression. Indeed, it is easy to notice that how the item representing one counter constraint is counted is almost the same as the resource in the classical SPPRC.

In this paper, we also introduce one resource (item) for each counter constraint in the label while dealing with series constraints in a different way. Given that \citet{burke2004state} describes most concepts related to labels in words, it is not easy to understand and implement their approach to handle labels. To the best of our knowledge, solving rostering problems based on regular expressions appears first in the field of constraint programming. \citet{pesant2004regular} introduces a new global constraint named $regular$ to restrict the values taken by a fixed-length sequence of variables $\bf x$. Each regular constraint corresponds to a deterministic finite automaton (DFA). A DFA can be described by a 5-tuple which includes $Q$ as a finite set of states, $ \Sigma$ as an alphabet, $ \delta $ as a partial transition function mapping $Q \times \Sigma$ to $Q$, $q_0$ as the initial state, and $F$ as a subset of $Q$ representing the final or accepting states. When a string is given as input, the automaton begins at the initial state $q_0$ and processes the string symbol by symbol. At each step, the transition function $\delta$ is applied to update the current state. The string is deemed accepted if and only if the last state reached belongs to $F$. If variables $\bf x$ are constrained by a regular constraint with a DFA $M$, any sequence of values taken by $\bf x$ should be accepted by $M$. Following this research, \citet{demassey2006cost} presents a cost-version of $regular$ constraints. \citet{metivier2009solving} proposes a constraint programming model to solve NRPs with $cost-regular$ constraints and gives several examples to show how to construct DFAs for soft NRP constraints. 

Here, we borrow some ideas from these research to define our labels. For each series constraint, we first construct a DFA with cost and then add a variable in the label to represent its state at one node. Let us take soft constraint 4 as an example to detail the procedure to construct DFAs. Assume that the minimum and maximum number of consecutive working days is set to $d_{min}$ and $d_{max}$ respectively. Let the state $S_i$ denote that one nurse has worked consecutively for $i$ days. There are a total of $d_{max} + 1$ states $Q = \{ S_0, S_1, ..., S_{d_{max}} \}$ where $q_0 = S_0$ is the initial state and all states are accepted, that is, $F = Q$. The alphabet $\Sigma$ consists of two values $\{ 1, 0\}$, where 1 represents a working day and 0 otherwise. The transition function $\delta$ is defined by the state transition table (Table \ref{tb_DFA}). The leftmost column indicates current states, and the other two columns give the outputs, which consist of the next states and costs associated with the state transitions, after inputting 1 and 0 respectively. There is also a graphical representation for a DFA. Figure \ref{DFA} gives an example DFA for soft constraint 4, where arcs denote state transitions. It is obvious that the DFA for soft constraint 5 can be created in the same way.  

\begin{table}[htbp]
\centering
\begin{tabular}{ccc}
\hline
Current state   & 1       & 0             \\ \hline
$S_0$ & $S_1$/0    & $S_0$/0          \\
... & ...    & ...          \\
$S_i$ & $S_{i+1}$/0    & $S_0$/$(d_{min}-i) \times C_{min}$ \\
... & ...    & ...          \\
$S_{d_{min}}$ & $S_{d_{min}+1}$/0    & $S_0$/0          \\

... & ...    & ...          \\
$S_{d_{max}-1}$ & $S_{d_{max}}$/0    & $S_0$/0          \\
$S_{d_{max}}$ & $S_{d_{max}}$/$C_{max}$ & $S_0$/0          \\ \hline
\end{tabular}
\caption{The example state transition table}
\label{tb_DFA}
\end{table}

\begin{figure}[htbp]
\centering
\includegraphics[width = .8\textwidth]{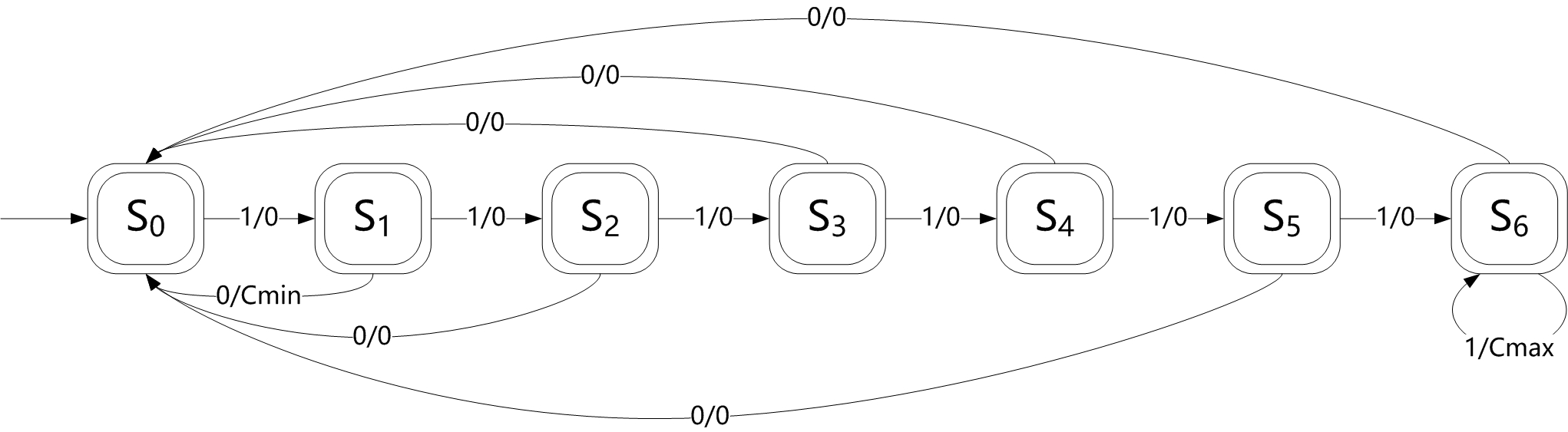}
\caption{Example DFA for the soft constraint 4 with $d_{min} = 2$ and $d_{max} = 6$}
\label{DFA}
\end{figure}

Finally, the labels for the NRPMU are defined as follows.
With each path $P$ from the source node to a node $v_i$, associate a label $L_i = (C_i, R_i^1, R_i^{2u}, R_i^3, S_i^4, S_i^5)$ where $C_i$ denotes the cost and the others are introduced for soft constraints 1-5. $R_i^1, R_i^{2u}, R_i^3$ are resources denoting the number of working days, days worked in unit $u$, and working weekends in the path, respectively. The DFAs for soft constraints 4 and 5 are represented by$S_i^4$ and $S_i^5$, respectively, in their current states. Accordingly, each arc ($v_i,v_j$) in the SPPRC for NRPMU is associated with a cost $c_{ij}$ caused by soft constraints 6-8, the resource consumptions $d_{ij}^1, d_{ij}^{2u}, d_{ij}^3$ for soft constraints 1-3, and inputs $t_{ij}^4, t_{ij}^5$ for the two DFAs. It is straightforward to determine and set these parameters based on whether vertex $v_j$ denotes a working day or a working weekend. Let $s^4/c^4 = \delta^4(S_i^4, t_{ij}^4)$ and $s^5/c^5 = \delta^5(S_i^5, t_{ij}^5)$ be the transition function outputs of corresponding DFAs.
When a path is extended along an arc $(v_i,v_j)$, the label $L_j$ is created by $C_j = C_i + c_{ij} + c^4 + c^5$, $R_j^1 = R_i^1 + d_{ij}^1$, $R_j^{2u} = R_i^{2u} + d_{ij}^{2u}$, $R_j^3 = R_i^3 + d_{ij}^3$,
$S_j^4 = s^4$, and $S_j^5 = s^5$.

\subsection{Dominance rules}
\label{sec_dominance}
Dominance rules are used to reduce the number of labels that need to be considered in the algorithm.  
Without the use of dominance rules, the algorithm would have to enumerate all possible paths, resulting in a brute force algorithm. For a partial path $P = (v_0, ..., v_i)$, we call another partial path $E = (v_j, ...., v_{n+1})$ a feasible extension of path $P$ if the path $(P, E) = (v_0, ..., v_i, v_j, ..., v_{n+1})$ represents a feasible path from the source node to the sink node. Given two partial paths $P_1$ and $P_2$ which both start from the source node, $P_1$ is said to dominate $P_2$ if path $(P_1, E)$ is a feasible path with a lower cost than path $(P_2, E)$ for any feasible extension $E$ of $P_2$. 

Efficient dominance rules have been presented for classical SPPRCs with time or capacity constraints; Nevertheless, these rules cannot be directly applied to solve pricing subproblems with soft constraints specific to NRPs. 
\citet{burke2014new} adapted the dominance rules for classical SPPRCs to treat the maximum and minimum number limits for the same resource, such as working days, separately. That is, for one resource with the maximum (minimum) number limit, a label $A$ dominates another label $B$ if the resource consumption of $A$ is not greater (smaller) than that of $B$.
Obviously, for ranged counter constraits, their method works only when two paths have the same resource consumption.
In contrast, we consider such maximum and minimum limits as a whole and propose tailored dominance rules that can work even if two paths have different resource consumption.

In the remainder of this section, we first present the dominance rules proposed for the NRPMU, followed by necessary explanations and proofs. Let $P_1$ and $P_2$ be two partial paths from $v_0$ to some vertex $v_i$, with labels $L_{P_1} = (C_{i,P_1}, R_{i,P_1}^{1}, R_{i,P_1}^{2u}, R_{i,P_1}^{3}, S_{i,P_1}^{4}, S_{i,P_1}^{5})$ and $L_{P_2} = (C_{i,P_2}, R_{i,P_2}^{1}, R_{i,P_2}^{2u}, R_{i,P_2}^{3}, S_{i,P_2}^{4}, S_{i,P_2}^{5})$, respectively. Given an feasible extension $E = (v_j, ...., v_{n+1})$, we denote as $P_1^{'}=(P_1,E)$ and $P_2^{'} = (P_2,E)$ the individual schedules generated by extending two paths with $E$. The penalty cost caused by the violation of one type of soft constraints when a partial path is extended to a completed individual schedule is denoted as $pn$. According to the definition in section \ref{sec_LableDefinition}, the costs of $P_1^{'}$ and $P_2^{'}$ can be represented by $Cost_{P_1^{'}} = C_{i,P_1} + c_{v_iv_j} + \sum_{(v_k,v_{k+1}) \in E} c_{v_kv_{k+1}} + pn_{P_1^{'}}^1 + \sum_{u \in U} pn_{P_1^{'}}^{2u} + pn_{P_1^{'}}^3 + pn_{P_1^{'}}^4 + pn_{P_1^{'}}^5$ and $Cost_{P_2^{'}} = C_{i,P_2} + c_{v_iv_j} + \sum_{(v_k,v_{k+1}) \in E} c_{v_kv_{k+1}} + pn_{P_2^{'}}^1 + \sum_{u \in U} pn_{P_2^{'}}^{2u} + pn_{P_2^{'}}^3 + pn_{P_2^{'}}^4 + pn_{P_2^{'}}^5$, respectively. Apparently, it is difficult to compare their costs directly because values related to $pn$ are unknown.

To deal with this difficulty, we propose analyzing the difference in costs between $P_1^{'}$ and $P_2^{'}$, which can be expressed as 
$Cost_{P_1^{'}} - Cost_{P_2^{'}} = C_{i,P_1} - C_{i,P_2} +  pn_{P_1^{'}}^1 - pn_{P_2^{'}}^1 + \sum_{u \in U} (pn_{P_1^{'}}^{2u} -  pn_{P_2^{'}}^{2u}) + pn_{P_1^{'}}^3 - pn_{P_2^{'}}^3 + pn_{P_1^{'}}^4 -  pn_{P_2^{'}}^4 + pn_{P_1^{'}}^5 - pn_{P_2^{'}}^5$. If we represent the difference of two '$pn$'s as $pd$, The cost difference of $P_1^{'}$ and $P_2^{'}$ can be rewritten as $C_{i,P_1} - C_{i,P_2} +  pd^1 + \sum_{u \in U} pd^{2u} + pd^3 + pd^4 + pd^5$. In fact, $pd$ is a bounded function whose maximum value $pd_{max}$ and minimum value $pd_{min}$ can be computed. Detailed explanations and proofs are presented in subsequent paragraphs. Assuming that these values are available, we can calculate the maximum cost difference as  $ \max (Cost_{P_1^{'}} - Cost_{P_2^{'}}) = C_{i,P_1} - C_{i,P_2} +  pd^1_{max} + \sum_{u \in U} pd^{2u}_{max} + pd^3_{max} + pd^4_{max} + pd^5_{max}$. If $\max (Cost_{P_1^{'}} - Cost_{P_2^{'}}) < 0$, which means $Cost_{P_1^{'}}$ is always less than $Cost_{P_2^{'}}$, we can determine that $P_1$ dominates $P_2$. Similarly, if $\min (Cost_{P_1^{'}} - Cost_{P_2^{'}}) > 0$, which means  $Cost_{P_1^{'}}$ is always greater than $Cost_{P_2^{'}}$, we can determine that $P_2$ dominates $P_1$.

The following paragraphs aim to clarify whether $pd$ is a bounded function and how to compute its maximum and minimum values if it is.
\color{black}
We take soft constraints 1, which limit the maximum ($d_{max}$) and minimum ($d_{min}$) number of working days, as an example to illustrate the process to analyze $pd$. Consider two partial paths $P_1$ and $P_2$ and two complete paths $P_1^{'}$ and $P_2^{'}$, which are generated by extending $P_1$ and $P_2$ with $E$, respectively. 
With loss of generality, assume that $R_{i,P_1}^{1} < R_{i,P_2}^{1}$. In Figure \ref{six}, the line segment AD represents the scheduling period and red line segments denote time intervals between $R_{i,P_1}^{1}$ and $R_{i,P_2}^{1}$. Depending on their values, there are six possible cases where the endpoints of a red line segment are located at different parts of AD.
%
Let $d_e$ be the number of working days in the extension $E$ and $d_c$ be the day index of the last nodes in paths $P_1$ and $P_2$. It is easy to understand that the value of $d_e$ ranges from 0 to $|D|-d_c$.
Let $R_{n+1,P_1^{'}}^{1}$ and $R_{n+1,P_2^{'}}^{1}$ be the final numbers of working days in paths $P_{1}^{'}$ and $P_{2}^{'}$ respectively.
As the value of $d_e$ changes from 0 to $|D|-d_c$, the value of $R_{n+1,P_1^{'}}^{1}$ ($R_{n+1,P_2^{'}}^{1}$) changes from $R_{i,P_1}^{1}$ ($R_{i,P_2}^{1}$) to $R_{i,P_1}^{1} + |D| - d_c$ ($R_{i,P_2}^{1} + |D| -d_c$). Moreover, once the value of $d_e$ is determined, the $pd^1$ can be computed. In other words, $pd^1$ is a function of $d_e$ (denoted $pd = f(d_e)$).

\begin{figure}[htbp]
\centering
\includegraphics[width = .8\textwidth]{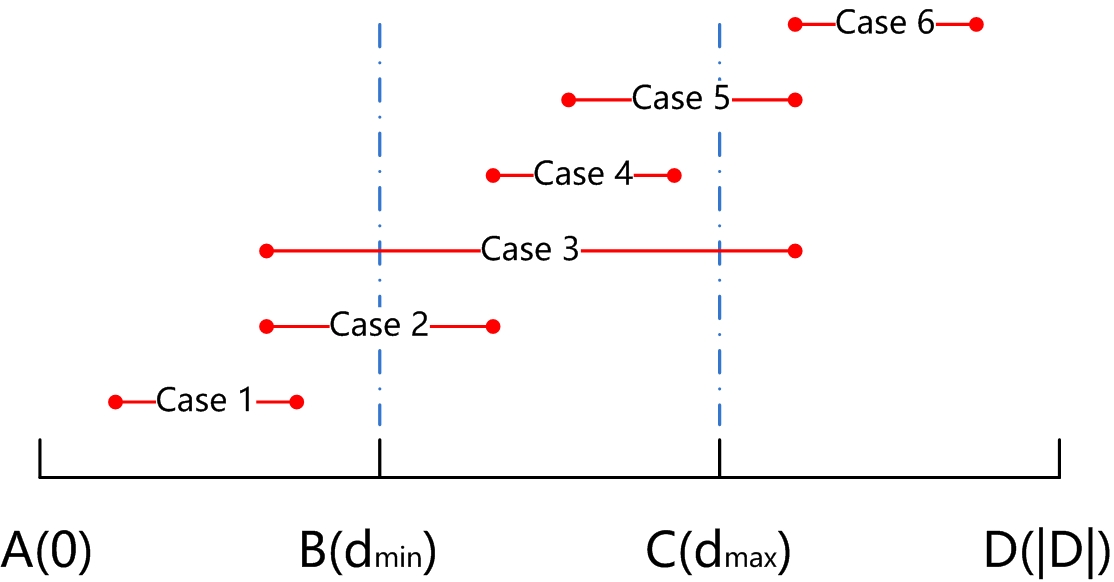}
\caption{Six cases depending on  the values of $R_{i,P_1}^{1}$ and $R_{i,P_2}^{1}$}
\label{six}
\end{figure}

%
Using case 1 in Figure \ref{six} as an example, we illustrate the effect of the variation of $d_e$ in Figure \ref{move}. The red line segment, which denotes the time interval between $R_{n+1,P_1^{'}}^{1}$ and $R_{n+1,P_2^{'}}^{1}$, shifts forward based on the magnitude of $d_e$. As shown in Figure \ref{move}, the length of the time interval remains unchanged. This variation process can be divided into five stages based on the method to compute $pd^1$. Accordingly, Figure \ref{axis} shows the graph of the function $pd^1 = f(d_e)$, where $d_e$ ranges from 0 to $|D| - d_c$. It is worth noting that the function is a decreasing piecewise function, which consists of five pieces.

In fact, regardless of initial values of $R_{i,P_1}^{1}$ and $R_{i,P_2}^{1}$, $pd^1$ is always a decreasing function of $d_e$ if $R_{i,P_1}^{1} < R_{i,P_2}^{1}$. The proof is given in Claim \ref{proof}. Therefore, when $d_e$ takes the values of 0 and $|D|-d_c$, $pd^1$ reaches its maximum and minimum values, respectively. Conversely, if $R_{i,P_1}^{1} > R_{i,P_2}^{1}$, it is not difficult to understand that $pd^1$ is an increasing function of  $d_e$. As a result, $pd^1$ reaches its maximum at $|D|-d_c$ and minimum at 0.  $pd^1$ is always equal to 0 When $R_{i,P_1}^{1} = R_{i,P_2}^{1}$. 

For the sake of clarity, in Claim \ref{proof} and its proof, we use $pd$, $R_1$, $R_2$, $R_1^{'}$ and $R_2^{'}$  as shorthand notations for  $pd^1$, $R_{i,P_1}^{1}$, $R_{i,P_2}^{1}$, $R_{n+1,P_1^{'}}^{1}$ and $R_{n+1,P_2^{'}}^{1}$, respectively.

\begin{figure}[htbp]
\centering
\includegraphics[width = .8\textwidth]{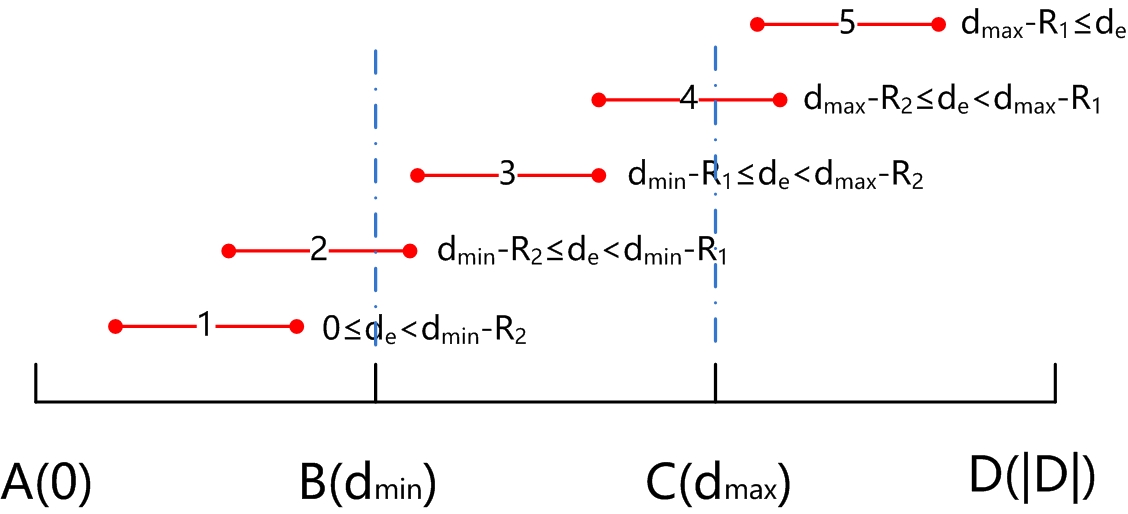}
\caption{A diagram showing how  the line segment for case 1 shifts forward with the variation of $d_e$ ($R_1$ and $R_2$ are shorthand notations for $R_1^{'}$ and $R_2^{'}$, respectively)}
\label{move}
\end{figure}

\begin{figure}[htbp]
\centering
\includegraphics[width = .8\textwidth]{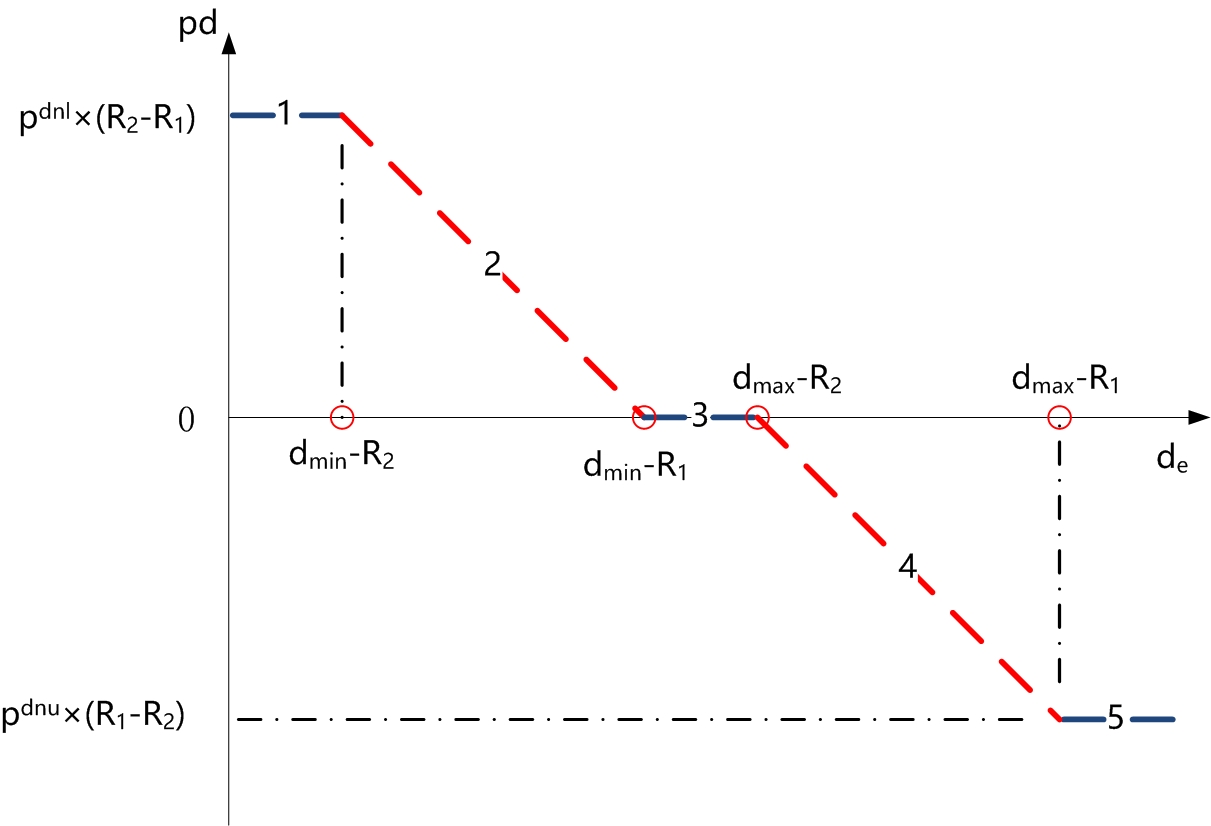}
\caption{A diagram showing how the value of $pd$ for case 1 changes with the
variation of $d_e$ ($R_1$ and $R_2$ are shorthand notations for $R_1^{'}$ and $R_2^{'}$, respectively)}
\label{axis}
\end{figure}

\begin{theorem}
\label{proof}
Given two paths $P_1$ and $P_2$ with associated resource consumptions $R_1$ and $R_2$, the $pd$ is a decreasing function of $d_e$ if $R_1 < R_2$.
\end{theorem}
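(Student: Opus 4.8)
The plan is to observe that the entire working-day penalty of a completed schedule depends only on its final working-day count, that this dependence is convex and piecewise-linear, and that monotonicity of $pd$ then follows at once from the fact that the forward differences of a convex function are non-decreasing. This single observation replaces the six-case split of Figure \ref{six}.

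First I would write the penalty attached to a completed path with terminal working-day count $R'$ explicitly. Since the constraint charges each day below $d_{min}$ at rate $p^{dnl}$ and each day above $d_{max}$ at rate $p^{dnu}$, this penalty is
\[ pn(R') = p^{dnl}\,\max(0,\,d_{min}-R') + p^{dnu}\,\max(0,\,R'-d_{max}). \]
Because $d_{min}\le d_{max}$, the two terms never act simultaneously, so $pn$ is convex and piecewise-linear: it decreases with slope $-p^{dnl}$ on $[0,d_{min}]$, is flat on $[d_{min},d_{max}]$, and increases with slope $p^{dnu}$ on $[d_{max},|D|]$. Each of the six positional cases of Figure \ref{six} is exactly one way of placing the ordered pair $(R_1',R_2')$ into the three regions delimited by $d_{min}$ and $d_{max}$, and the convex description above treats them uniformly.

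Next, since the extension $E$ contributes the same $d_e$ working days to both paths, the terminal counts are $R_1'=R_1+d_e$ and $R_2'=R_2+d_e$, so $pd=pn(R_1+d_e)-pn(R_2+d_e)$ and the gap $R_2'-R_1'=R_2-R_1=:\Delta>0$ stays fixed as $d_e$ varies. Writing $t=R_1+d_e$, it suffices to show $h(t)=pn(t)-pn(t+\Delta)$ is non-increasing in $t$. Incrementing $d_e$ by one day changes $pd$ by $[pn(t+1)-pn(t)]-[pn(t+\Delta+1)-pn(t+\Delta)]$; by convexity of $pn$ the forward difference $pn(x+1)-pn(x)$ is non-decreasing in $x$, and since $t\le t+\Delta$ this bracketed quantity is $\le 0$. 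Hence $pd$ never increases as $d_e$ grows, which is the claim (the paper's ``decreasing'' should be read as weakly decreasing, since the flat stretches give constant pieces). Should one wish to avoid convexity, the same inequality can be read directly off the three ordered slope values $-p^{dnl}\le 0\le p^{dnu}$.

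The step I expect to be the main obstacle is controlling $pd$ as $R_1'$ and $R_2'$ cross the breakpoints $d_{min}$ and $d_{max}$: done naively this forces the six-case enumeration and a separate slope computation in each, where sign errors are easy. The convexity observation is precisely the device that collapses all six cases, so the real content is recognizing that $pn$ is convex rather than verifying any individual case. A minor point to confirm first is that the working-day penalty depends only on the terminal count $R'=R_i+d_e$ and is not partly absorbed into $C_i$, so that $pd$ is genuinely a function of $d_e$ alone; this is exactly what the label definition in Section \ref{sec_LableDefinition} guarantees. Once this is settled, the extreme values $pd_{max}$ at $d_e=0$ and $pd_{min}$ at $d_e=|D|-d_c$ needed for the dominance test follow immediately, and the identical template applies verbatim to soft constraints 2 and 3.
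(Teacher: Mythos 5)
Your proof is correct, and it takes a genuinely different route from the paper's. The paper proves the claim by exactly the six-case enumeration you set out to avoid: it tracks how the segment with endpoints $R_1'$ and $R_2'$ migrates through the cases of Figure \ref{six} as $d_e$ grows, tabulates an explicit formula for $pd$ in each case (Table \ref{TabStates}), verifies that each formula is non-increasing in $d_e$, and then checks continuity of $f(d_e)$ at the boundary points where a case change occurs, so that the decreasing pieces glue into a globally decreasing function. You replace all of this with the single observation that $pd(d_e)=pn(R_1+d_e)-pn(R_2+d_e)$ for a convex piecewise-linear penalty $pn$ (slopes $-p^{dnl}\le 0\le p^{dnu}$), so that monotonicity of forward differences of a convex function settles every case and every boundary crossing at once; the continuity check the paper needs becomes automatic. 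What the paper's approach buys is the explicit per-case formulas, which it can reuse directly when evaluating the bounds needed by the dominance test; your approach produces those extremes only as the endpoint values $pd(0)$ and $pd(|D|-d_c)$, which is in fact all the dominance rule requires. What your approach buys is brevity, immunity to the per-case sign errors you identify as the main risk, and genuine generality: any counter whose penalty is a convex function of the terminal count is covered verbatim, including the one-sided soft constraint 3 and the unit-specific constraints 2, whereas the case-analysis template must in principle be redone for each constraint type. Your reading of ``decreasing'' as weakly decreasing is also consistent with the paper, whose own pieces ``remain unchanged or decrease,'' and your flagged prerequisite --- that the constraint-1 penalty depends only on the terminal count $R_i+d_e$ and is not partly absorbed into $C_i$ --- is indeed guaranteed by the label definition of Section \ref{sec_LableDefinition}.
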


\begin{proof}
It is easy to see that each stage in Figure \ref{move} corresponds to a case in Figure
\ref{six}. The process to shift forward a red line segment as $d_e$ increases can be considered as the process to change a line segment between different cases. Table \ref{TabStates} presents the formula to compute $pd$ and next possible cases for each case. Note that two formulas are given for each case and the second formula is obtained by replacing $R_1^{'}$ ($R_2^{'}$) in the first one with $R_1 + d_e$ ($R_2+d_e$).

As discussed above, $pd$ is a function of $d_e$ ($pd = f(d_e)$). Once the value of $d_e$ is determined, we are able to compute $f(d_e)$ by one formula corresponding to the case of the values of $R_1^{'}$ and $R_2^{'}$. Based on the feasible range of $d_e$, the function $f(d_e)$ can consist of one or several sub-functions, each of which corresponds to one formula in Table \ref{TabStates}. It is obvious that the result value of each formula remains unchanged or decreases as $d_e$ increases, that is, every sub-function is a decreasing function. In addition, the function $f(d_e)$ is continuous at boundary points where case change occurs if several sub-functions are involved. For instance, at the boundary point ($d_e = d_{min} - R_1$) between cases 3 and 5, the results of the formulas for cases 3 and 5 are both $(d_{max}-R_2-d_{min}+R_1) \times p^{dnu}$. Finally, we conclude that since the function $pd = f(d_e)$ is a continuous function made up of one or several decreasing sub-functions, it is a decreasing function.
The proof is completed. 
\end{proof}

\begin{table}[]
\centering
\addtolength{\leftskip} {-2cm}
\addtolength{\rightskip}{-2cm}
\begin{tabular}{ccc}
\hline
Current case  & $pd$  & Next possible cases   \\ \hline
\multirow{2}{*}{Case 1} & $((d_{min} - R_1^{'}) - (d_{min} - R_2^{'})) \times p^{dnl}$   & \multirow{2}{*}{2}    \\   & \multicolumn{1}{c}{$ = (R_2 - R_1) \times p^{dnl}$}              &   \\
\multirow{2}{*}{Case 2} & $(d_{min}-R_1^{'}) \times p^{dnl}$                                & \multirow{2}{*}{3 or 4}      \\      & \multicolumn{1}{c}{$= (d_{min}-R_1-d_e) \times p^{dnl}$ }                                         &                                                       \\
\multirow{2}{*}{Case 3} &  $ (d_{min}-R_1^{'}) \times p^{dnl} - (R_2^{'} - d_{max}) \times p^{dnu}$ & \multirow{2}{*}{5}                                    \\
                        & \multicolumn{1}{c}{$=(d_{min}-R_1) \times p^{dnl} + (d_{max}-R_2) \times p^{dnu}   - (p^{dnl} + p^{dnu}) \times d_e$}    &    \\
Case 4                  & 0                                 & 5                  \\
\multirow{2}{*}{Case 5} & $(d_{max}-R_2^{'}) \times p^{dnu}$                                & \multirow{2}{*}{6}                                    \\
                        & \multicolumn{1}{c}{ $=(d_{max}-R_2-d_e) \times p^{dnu}$ }   & \\
\multirow{2}{*}{Case 6} & \multicolumn{1}{c}{$((R_1^{'}-d_{max})-(R_2^{'}-d_{max})) \times p^{dnu}$}                                         & \multicolumn{1}{c}{\multirow{2}{*}{\textbackslash{}}} \\
                        & $=(R_1 - R_2) \times p^{dnu}$ & \multicolumn{1}{l}{}        \\ \hline
\end{tabular}
\caption{Changes between different cases}
\label{TabStates}
\end{table}

The methods provided above can be applied to soft constraints 1, 2, 4, and 5 for the NRPMU. Additionally, these methods remain effective for other soft NRP constraints with maximum and minimum number limits. Identical ideas can be used to analyze $pd^3$ for soft constraint 3 that limits only the maximum number of working weekends. Its maximum and minimum values are not challenging to obtain.

\subsection{How our method differs from Omer and Legrain’s}
\label{subsec_differ}
The label definition and dominance rules we proposed have been described in section \ref{sec_LableDefinition} and \ref{sec_dominance}. In the following, we outline the differences between our method and the one proposed by \citet{omer2023dedicated} in terms of these two aspects. Notice that they present an outstanding and systematic work, which includes topics not covered in our manuscript. For instance, we do not discuss how to handle hard versions of related constraints. Here, we only compare the methods involved in both works.

{\bf The label definitions}: the first difficulty we encounter is how to define and extend the label, which is related to how to define resource consumption and cost associated with arcs of the graph constructed for the shortest path problem with resource constraints. The counter constraint can be easily handled by treating them as one common resource like the time in vehicle routing problems. In contrast, the series constraint is hard to deal with. To address the series constraint, \citet{omer2023dedicated} develop a set of tailored rules to define and extend labels. In contrast to their method, we introduce the deterministic finite automaton (DFA) to deal with labels.

Although these two methods are almost the same in essence for most constraints, the DFA-based method is more efficient when approaching forbidden pattern constraints. To handle a forbidden pattern constraint when extending a label to a new vertex $v_i$, \citet{omer2023dedicated} present “Algorithm 2” to find any subpatterns and determine the resource value at $v_i$. The time complexity of “Algorithm 2” is $O(n)$. In contrast, the DFA-based method treats this process as a state transition, which can be completed in $O(1)$ time according to corresponding state transition tables. Using the DFA-based method can be more valuable when dealing with problems that have a number of forbidden patterns. Additionally, multiple forbidden patterns may have overlapping DFA states. There are mature algorithms available to decrease the number of states \citep{pesant2004regular}, thereby improving algorithm efficiency. 

{\bf Dominance rules}: the second difficulty we encounter is how to define dominance rules to reduce the number of labels extended in the DP algorithm. \citet{omer2023dedicated} present dominance rules for both hard and soft constraints. However, we only compare our dominance rules with theirs with respect to soft constraints. To clarify the differences, we directly adopt the symbols they defined. 

Let us recall some definitions first. Consider two partial paths  $P$ and $Q$ from $o$ to some vertex $v \in V_i$  (vertices of the graph for nurse $i$). They denote as $\gamma^{0}(P)$  the cost of $P$ and  $\gamma^{r}(P)$ the consumption of resource $r\in R$  along $P$ . Let $\overline{Q}$ be a completion of $P$  and $Q$ , which means $[P,{\overline{Q}}]$ and $[Q,{\overline{Q}}]$ are paths from the origin vertex $o$ to the sink vertex $t$. For any soft resource  $r$, Let $G^{r}(P,{\overline{Q}})$  and $G^{r}(Q,{\overline{Q}})$  be the penalty costs resulting from violations of resource $r$ along $\overline{Q}$  in paths $[P,{\overline{Q}}]$  and $[Q,{\overline{Q}}]$, respectively. Here, we define $c_{{\overline{Q}}}$  as the sum of arc costs when extending $P$ or $Q$ from $v$  to the sink vertex  $t$ along $\overline{Q}$ . Then, it is easy to see that the final costs of feasible paths   $[P,{\overline{Q}}]$ and $[Q,{\overline{Q}}]$ are $\gamma^{0}([P,{\overline{Q}}])=\gamma^{0}(P){+}c_{ \overline{Q}}+\sum_{r\in{\cal R}}G^{r}(P,{\overline{Q}})$ and $\gamma^{0}([Q,{\overline{Q}}])=\gamma^{0}(Q){+}c_{ \overline{Q}}+\sum_{r\in{\cal R}}G^{r}(Q,{\overline{Q}})$. Obviously, we cannot directly compare their final costs since   $G^{r}(P,{\overline{Q}})$  and $G^{r}(Q,{\overline{Q}})$ are unknown. 

The key idea to handle this difficulty is to analyze the difference between the final costs of these two paths:$\gamma^{0}([P,{\overline{Q}}])-\gamma^{0}([Q,{\overline{Q}}]) = \gamma^{0}(P)+c_{\overline{Q}}+\sum_{r \in R}G^{r}(P,{\overline{Q}})-(\gamma^{0}(Q)+c_{\overline{Q}}+\sum_{r\in R}G^{r}(Q,{\overline{Q}}))$, which is equal to $\gamma^{0}(P) - \gamma^{0}(Q) + \sum_{r \in R}(G^{r}(P,{\overline{Q}}) - G^{r}(Q,{\overline{Q}}))$. \citet{omer2023dedicated} introduce $\Delta^{r}(P,Q,{\overline{{Q}}})$ to be the difference between $G^{r}(P,{\overline{Q}})$  and $G^{r}(Q,{\overline{Q}})$. Let $x$  represent the resource consumption of $r$  along $\overline{Q}$. $\Delta^{r}(P,Q,{\overline{{Q}}})$  is indeed a function of $x$  (denoted as $\Delta^{r}(P,Q,{\overline{{Q}}}) = f(x)$), whose range of variation is available. Apparently, the minimum resource consumption is 0. Let $\overline{D}$  denote the maximum possible resource consumption. If we take a constraint on the number of working days as an example, $\overline{D}$  represents the number of days along $\overline{Q}$. This means that the maximum resource consumption is achieved by working every day along $\overline{Q}$. 

For the moment, all the concepts mentioned above are also discussed in our manuscript although we used different symbols and examples. With the notation defined above, we start to explain the differences between our dominance rules and theirs in detail. We both realize that as $x$ varies from 0 to $\overline{D}$, $f(x)$ is a bounded function. However, they directly present the formulas (indexes with (9)) to compute its upper bound. In contrast, we first present a proof in “Claim 1” to prove that $f(x)$ is a deceasing (increasing) function of $x$ if $\gamma^{r}(P) < \gamma^{r}(Q)$ ($\gamma^{r}(P) > \gamma^{r}(Q)$, note that $f(x)$ is always equal to 0 when $\gamma^{r}(P) = \gamma^{r}(Q)$, so we do not discuss it in our proof). As a result, we know that $f(x)$ reaches its upper bound at $x=0$ ($\overline{D}$) when $\gamma^{r}(P) < \gamma^{r}(Q)$ ($\gamma^{r}(P) > \gamma^{r}(Q)$).
Since there are different situations where initial resource consumptions of two partial paths and their variations are different, it is not so easy to know these conclusions. In our opinion, the proof is the theoretical foundation of the dominance rules we developed.

The proof helps us to present more efficient dominance rules. Their method can only compare two paths with different costs in one direction: it can determine whether the path with lower cost dominates the path with higher cost, but not vice versa. In contrast, our method can compare two paths in both directions and determine whether either path dominates the other regardless of their costs. Let us explain it in detail. They denote $\overline{\Delta}^{r}(P,Q,\overline{Q})$ as the upper bound. According to their formulas “(9)”, $\overline{\Delta}^{r}(P,Q,\overline{Q})$ is always greater than or equal to 0; Hence, the “Property 1” in their manuscript implicitly assumes that the initial cost of $P$ is less than that of $Q$, that is $\gamma^{0}(P)<\gamma^{0}(Q)$. Otherwise, it is impossible to achieve $\gamma^{0}(P) + \sum_{r \in R} \overline{\Delta}^{r}(P,Q,\overline{Q}) <\gamma^{0}(Q)$ to determine that $P$ dominates $Q$. In other words, their method cannot be applied to determine whether the path with higher cost dominates the one with lower cost.
Based on our proof, we know that $f(x)$ reaches its lower bound at $x=\overline{D}$ ($0$) when $\gamma^{r}(P) < \gamma^{r}(Q)$ ($\gamma^{r}(P) > \gamma^{r}(Q)$). In addition to the upper bound, we compute the lower bound $\underline{\Delta}^{r}(P,Q,\overline{Q})$ for each resource $r$. As a consequence, we can determine that $Q$ dominates $P$ if $\gamma^{0}(P) + \sum_{r \in R} \underline{\Delta}^{r}(P,Q,\overline{Q}) > \gamma^{0}(Q)$, which means $\gamma^{0}([P,\overline{Q}]) - \gamma^{0}([Q,\overline{Q}])$ is always greater than 0. That is, even if $Q$ is the path with higher cost, it is possible to identify $Q$ dominates $P$ provided that $\sum_{r \in R} \underline{\Delta}^{r} (P,Q,\overline{Q}) > \gamma^{0}(Q) - \gamma^{0}(P) > 0$.
This is the situation \citet{omer2023dedicated} do not consider.

According to the analysis presented above, it is easy to understand that the first step of applying their dominance rules is to identify the path with lower cost as $P$ and the other one as $Q$. It is notable that our dominance rules do not distinguish $P$ and $Q$ based on their costs. The reason is that the upper bound is equal to the lower bound when we permute $P$ and $Q$.

Finally, our method computes the upper bound differently when $\gamma^{r}(P) > \gamma^{r}(Q)$. According to our proof and method, the upper bound is possible less than 0 in theory. Consider the constraint limiting the maximum 6 and minimum 3 number of working days and two partial paths $P$ and $Q$ with 3 and 1 working days, respectively. Assuming that $\overline{D}$ is 1, the upper bound is achieved at 1, that is $0-c_{r}^{L}(3-2)$. This may help when multiple resources are involved. 

\subsection{Applying dominance rules across multiple nodes}
\label{sec_multiplenodes}
classical DP algorithms for SPPRCs apply dominating rules within one node to reduce surplus labels. The dominance rules provided above are also based on two partial paths $P_1$ and $P_2$ that arrive at the same node $v_i$. However, as illustrated in Figure \ref{graph}, since one node is created for each unit and shift in the directed graph for the NRPMU, the number of nodes on the same day is greater than that for the NRP with single unit. Consequently, the algorithm efficiency gradually decreases as the number of units increases.

To achieve a better performance, an accelerating strategy specific to the NRPMU is provided. Since different units have the same shift types in our NRPMU instances, nodes representing the same shift type in different units on the same day have the same set of arcs to nodes on the next day. Therefore, it is easy to understand that one feasible extension $E$ of path $P_1$ whose last node corresponds to one shift in one unit is also feasible for path $P_2$ with the last node denoting one identical shift type in another unit. As a result, it is possible to apply the dominating rules to compare labels across multiple nodes corresponding to the same shift type in different units, which contributes to reducing more labels and improving algorithm performance. 
\color{black}

\subsection{Description of the DP algorithm and $B\&P$}
Since the graph constructed for the pricing subproblem is directed and acyclic, it is eay to determine the sequence in which the nodes are processed. For the sake of clarity, the algorithm that applies dominance rules within one single node is presented in Algorithm \ref{Alg}, based on which it is not hard to implement the accelerating strategy introduced in section \ref{sec_multiplenodes}. Related notations are given in the following.

\begin{itemize}
    \item $\Phi_i$: set of labels on node $v_i$.
    \item $Suc(v_i)$: list of  successors of node $v_i$.
    \item $E$: set of all nodes sorted by the day index.
    \item $REF(\phi_i,v_j)$: Function that returns a label generated by extending labels $\phi_i \in \Phi_i$ to node $v_j$ if the extension is feasible. The resource consumptions and DFA states in the label are updated.
    \item $DMN(\Phi_i)$ Procedure that removes labels that are dominated by others in the list $\Phi_i$.
\end{itemize}

\begin{algorithm}
    \caption{the DP algorithm for the pricing subproblem}
    \label{Alg}
    \begin{algorithmic}[1]
        \STATE Initialization:
        \STATE $\Phi_0 \gets {(0,...,0)}$
        \FOR{$v_i \in V \backslash \{v_0\}$ }
            \STATE $\Phi_i \gets \emptyset$
        \ENDFOR
        \STATE Extending labels
        \FOR{$v_i \in V $}
          \STATE $\Phi_i \gets DMN(\Phi_i)$
          \FOR{$v_j \in Suc(v_i)$}
            \FOR{$\phi_i \in \Phi_i$}
                \STATE $\Phi_j \gets \Phi_j \cup REF(\phi_i,v_j)$
            \ENDFOR
          \ENDFOR
        \ENDFOR
    \end{algorithmic}
\end{algorithm}

The $B\&P$ algorithm implementation follows the general $B\&P$ framework that has been described by a large number of papers \citep{feillet2010tutorial, vaclavik2018accelerating}. Based on the work by \citet{burke2014new}, the branching strategy used is to branch on individual nurse-day-unit-shift assignments $\theta_{ndus}$. When the MP is solved by the CG, we compute $\theta_{ndus} = \sum_{l \in L_n} x_{nl} \cdot a_{nldus}$ and select the assignment with the value closest to 1. Then, two child nodes are generated by adding constraints that restrict whether to select this assignment or not. 
To handle these constraints in the pricing subproblem, one can either remove corresponding arcs in the graph or force their selection.
A simple deterministic heuristic is applied to provide initial solutions to the root node. In addition, several strategies are adopted to improve the algorithm performance. Given that the pricing procedure consists of a set of subproblems and iterates many times, a heuristic strategy is developed to reduce the number of subproblems to be solved in one iteration. That is, if the reduced cost of a subproblem is not negative in one iteration, it will be ignored in the following iterations. All subproblems will be checked again in the final iteration so that the optimal solution is found. Also, Multithreading is employed to solve several subproblems simultaneously and speed up the pricing process.

\section{Computational experiments}
\label{sec_experiments}
To assess the efficiency of the proposed algorithm to solve NRPMU, two experiments have been conducted. First, we test the performance of the DP algorithm presented in section \ref{sec_solution} to solve pricing subproblems and compare it with the DP algorithm proposed by \cite{burke2014new}, \cite{omer2023dedicated}, and a commercial integer programming solver (CIPSolver). Second, we analyze the benefit of considering multiple units and assess the efficiency of the $B\&P$ algorithm to solve NRPMU instances.
Since there are no available benchmark instances, a total of 30 NRPMU instances are created based on the problem definition and real world requirements. Their sizes vary from small (10 nurses, 2 weeks, 2 units) to large (50 nurses, 4 weeks, 4 units), and Table \ref{TabBenchmark} provides more information regarding their dimensions. Note that we assume that there are three identical  shifts (early, late, and night) in each unit for all instances. These instances are given in an XML format (available online).

\begin{table}[]
\centering
\begin{tabular}{cccccccc}
\hline
Instances & Nurses & Weeks & Units & Instances & Nurses & Weeks & Units \\ \hline
1         & 10     & 2     & 2     & 16        & 10     & 4     & 2     \\
2         & 20     & 2     & 2     & 17        & 20     & 4     & 2     \\
3         & 30     & 2     & 2     & 18        & 30     & 4     & 2     \\
4         & 40     & 2     & 2     & 19        & 40     & 4     & 2     \\
5         & 50     & 2     & 2     & 20        & 50     & 4     & 2     \\
6         & 10     & 2     & 3     & 21        & 10     & 4     & 3     \\
7         & 20     & 2     & 3     & 22        & 20     & 4     & 3     \\
8         & 30     & 2     & 3     & 23        & 30     & 4     & 3     \\
9         & 40     & 2     & 3     & 24        & 40     & 4     & 3     \\
10        & 50     & 2     & 3     & 25        & 50     & 4     & 3     \\
11        & 10     & 2     & 4     & 26        & 10     & 4     & 4     \\
12        & 20     & 2     & 4     & 27        & 20     & 4     & 4     \\
13        & 30     & 2     & 4     & 28        & 30     & 4     & 4     \\
14        & 40     & 2     & 4     & 29        & 40     & 4     & 4     \\
15        & 50     & 2     & 4     & 30        & 50     & 4     & 4     \\ \hline
\end{tabular}
\caption{Benchmark instances}
\label{TabBenchmark}
\end{table}

Computational experiments were carried out on a Windows 10 PC with an Intel Core i7 1.8-GHz processor and 16 GB of RAM.
Algorithms were implemented  in Java using JDK 15.0.2. The IBM ILOG CPLEX 22.1.0 is used as the CIPSolver to solve related linear programming or integer programming models. 
The maximum computational time was set to 15 seconds in the first experiment and to 3600 seconds in the third one.

\subsection{Performance of different exact methods for the pricing subproblem }

The first experiment aims to test the performance of the DP algorithm proposed in this paper to solve pricing subproblems. Given the impact of dual variables on problem difficulty, all dual variables $\lambda_{dus}$ are set to zero.
As discussed above, there have been several methods to solve it. For example, using a CIPSolver is able to solve it based on the model given in section 2.2. Our DP algorithm can solve pricing subproblems to optimality, so we compare it with three exact methods, namely, using a CIPSolver and the DP algorithms proposed by \citet{burke2014new} and \citet{omer2023dedicated}. 
As their DP algorithms were initially proposed to deal with general NRP instances, some modifications have to be made to solve NRPMU ones. 

Given that the method presented by \citet{burke2014new} is characterized by their dominating rules that address the maximum and minimum number limit of the same resource separately, we adopt the label definition in this paper when implementing their DP algorithm. In other words, we implementing their DP algorithm that differs from ours only in dominating rules.
The DP algorithm described in algorithm \ref{Alg} is called DPP, while the one with dominating rules developed by \citet{burke2014new} is called DPB.
As stated in the section \ref{subsec_differ}, the main difference between the method proposed by \citet{omer2023dedicated} and ours is that their
method can only compare two paths with different costs in one direction using the upper bound. Hence, we adapt our DPP by limiting the dominance rules to only consider the upper bound. The adapted DPP is referred to as DPU.

To achieve a better performance, the accelerating strategy to apply dominance rules across multiple nodes is proposed in section \ref{sec_multiplenodes}. We refer to the DP algorithm implementing this strategy as the improved version of DPP (DPPI).

\begin{table}[]
\centering
\begin{tabular}{ccccc}
\hline
Sub Instances & Instance ID & Nurse ID & Weeks & Units \\ \hline
Sub1          & 2           & 1        & 2     & 2     \\
Sub2          & 3           & 2        & 2     & 2     \\
Sub3          & 3           & 6        & 2     & 2     \\
Sub4          & 5           & 12       & 2     & 2     \\
Sub5          & 5           & 43       & 2     & 2     \\
Sub6          & 7           & 2        & 2     & 3     \\
Sub7          & 8           & 19       & 2     & 3     \\
Sub8          & 9           & 33       & 2     & 3     \\
Sub9          & 10          & 1        & 2     & 3     \\
Sub10         & 10          & 2        & 2     & 3     \\
Sub11         & 12          & 2        & 2     & 4     \\
Sub12         & 13          & 11       & 2     & 4     \\
Sub13         & 14          & 25       & 2     & 4     \\
Sub14         & 15          & 26       & 2     & 4     \\
Sub15         & 15          & 48       & 2     & 4     \\
Sub16         & 17          & 6        & 4     & 2     \\
Sub17         & 17          & 20       & 4     & 2     \\
Sub18         & 18          & 16       & 4     & 2     \\
Sub19         & 18          & 18       & 4     & 2     \\
Sub20         & 19          & 8        & 4     & 2     \\
Sub21         & 21          & 3        & 4     & 3     \\
Sub22         & 21          & 9        & 4     & 3     \\
Sub23         & 22          & 3        & 4     & 3     \\
Sub24         & 24          & 10       & 4     & 3     \\
Sub25         & 25          & 4        & 4     & 3     \\
Sub26         & 26          & 5        & 4     & 4     \\
Sub27         & 27          & 2        & 4     & 4     \\
Sub28         & 28          & 24       & 4     & 4     \\
Sub29         & 29          & 40       & 4     & 4     \\
Sub30         & 30          & 49       & 4     & 4     \\ \hline
\end{tabular}
\caption{Instances for the pricing subproblem}
\label{TabInstancesSub}
\end{table}

Clearly, the instances introduced in Table 3 cannot be directly used since the goal of the pricing subproblem is to create an individual schedule for only one nurse. Table \ref{TabInstancesSub} lists the instances to test pricing subproblem solutions, each of which corresponds to one nurse selected from one instance in Table 3. To identify these instances across two experiments, they are named with the prefix “Sub”. The Columns Instance ID and Nurse ID indicate how the subproblem instances and instances in Table 3 are connected.

\begin{table}[]
\centering
\addtolength{\leftskip} {-2cm}
\addtolength{\rightskip}{-2cm}
\begin{tabular}{ccccccccccc}
\hline
\multirow{2}{*}{Sub Instances} & \multicolumn{5}{c}{Time (millisecond)} &  & \multicolumn{4}{c}{The number of labels} \\ \cline{2-11} 
                               & Cplex  & DPB    & DPU   & DPP  & DPPI  &  & DPB       &    DPU      & DPP      & DPPI   \\ \hline
Sub1                           & 34     & 299    & 11    & 4    & 3     &  & 6552      & 810      & 412      & 232    \\
Sub2                           & 81     & 457    & 11    & 3    & 3     &  & 11331     & 732      & 358      & 202    \\
Sub3                           & 54     & 234    & 2     & 3    & 2     &  & 11112     & 544      & 336      & 189    \\
Sub4                           & 25     & 91     & 1    & 3    & 2     &  & 6942      & 584      & 395      & 218    \\
Sub5                           & 32     & 125    & 2     & 2    & 1     &  & 7309      & 803      & 426      & 243    \\
Sub6                           & 60     & 5170   & 154   & 31   & 33    &  & 43971     & 4690     & 2297     & 1109   \\
Sub7                           & 31     &        & 2     & 3    & 1     &  &           & 839      & 508      & 200    \\
Sub8                           & 25     & 4027   & 4     & 4    & 2     &  & 42884     & 1170    & 592      & 232    \\
Sub9                           & 79     &        & 18    & 7    & 4     &  &           & 1071     & 514      & 202    \\
Sub10                          & 28     & 3391   & 20    & 8    & 3     &  & 40737     & 828      & 548      & 212    \\
Sub11                          & 52     &        & 18    & 9    & 3     &  &           & 1530     & 772      & 232    \\
Sub12                          & 99     &        & 15    & 7    & 5     &  &           & 1410    & 670      & 202    \\
Sub13                          & 50     &        & 11    & 4    & 3     &  &           & 1408    & 668      & 200    \\
Sub14                          & 38     &        & 14    & 4    & 3     &  &           & 1251     & 636      & 195    \\
Sub15                          & 38     &        & 12    & 5    & 4     &  &           & 1530     & 772      & 232    \\
Sub16                          & 106    & 9101   & 16    & 10   & 4     &  & 105494    & 3719     & 1444     & 820    \\
Sub17                          & 105    & 10140  & 11    & 5    & 3     &  & 103461    & 2912     & 1405     & 802    \\
Sub18                          & 117    & 3888   & 51    & 8    & 10    &  & 68688     & 4479     & 1694     & 956    \\
Sub19                          & 130    & 2818   & 40    & 6    & 9     &  & 57861     & 5110     & 1722     & 960    \\
Sub20                          & 135    & 11846  & 14    & 8    & 4     &  & 104914    & 3625     & 1462     & 829    \\
Sub21                          & 153    &        & 32    & 20   & 16    &  &           & 4831     & 2056     & 816    \\
Sub22                          & 160    &        & 2783  & 819  & 534   &  &           & 51072   & 19577    & 8558   \\
Sub23                          & 176    &        & 63    & 20   & 22    &  &           & 5411     & 2068     & 820    \\
Sub24                          & 132    &        & 66    & 15   & 8     &  &           & 7423     & 2484     & 960    \\
Sub25                          & 137    &        & 60    & 18   & 10    &  &           & 5579     & 1937     & 779    \\
Sub26                          & 125    &        & 165   & 36   & 17    &  &           & 6462     & 2740     & 832    \\
Sub27                          & 180    &        & 152   & 33   & 34    &  &           & 7103     & 2692     & 820    \\
Sub28                          & 127    &        & 151   & 30   & 18    &  &           & 9736     & 3246     & 960    \\
Sub29                          & 115    &        & 82    & 34   & 10    &  &           & 6729    & 2694     & 822    \\
Sub30                          & 104    &        & 84   & 31   & 14    &  &           & 6273     & 2607     & 798    \\ \hline
\end{tabular}
\caption{Results for the pricing subproblem instances}
\label{TabSubResults}
\end{table}

These instances are solved by five methods, namely, CIPSolver (Cplex), DPB, DPU, DPP, and DPPI. The results are presented in Table \ref{TabSubResults}. The comparison is based on the computational time (in milliseconds) and the number of labels to be extended. The second criterion is only applicable for the DP based algorithms, and fewer labels mean that the dominating rules perform better. The cells are empty when the computational time limit is exceeded. As we can see, DPP succeeds in solving almost all instances within 40 milliseconds, and the computation time and the label count increase with the number of units or weeks. In contrast, DPB can solve only instances with 2 weeks and no more than 3 units or with 4 weeks and 2 units within the given time limit. Its performance drops sharply as the instance size grows. Although Cplex solves all instances, it is time-consuming. 
With respect to the number of labels, compared to DPU, DPP is capable of reducing more labels, averaging $53.8\%$. As expected, DPPI extends fewer labels than DPP in all instances.
On average, the number of extended labels decreases by about $57.6\%$. These results confirm that the ideas presented above to develop DPP and DPPI are effective and efficient.

\subsection{Comparision with commercial solvers}

\begin{table}[]
\centering
\begin{tabular}{ccccccccc}
\hline
\multirow{2}{*}{Instance} & \multirow{2}{*}{Single} & \multicolumn{3}{c}{Cplex}   &  & \multicolumn{3}{c}{ $B\&P$} \\ \cline{3-5} \cline{7-9} 
                          &                         & LB    & UB           & Time &  & LB        & UB              & Time   \\ \cline{1-9} 
1                         & 130             & 104   & \textbf{122} & 17   &  & 122       & \textbf{122}    & 5      \\
2                         & 177                & 106   & \textbf{133} & 84   &  & 133       & \textbf{133}    & 10     \\
3                         & 198              & 108   & \textbf{131} & 1196 &  & 130.5     & \textbf{131}    & 21     \\
4                         & 388                 & 282   & \textbf{353} & 129  &  & 353       & \textbf{353}    & 15     \\
5                         & 522                  & 349   & \textbf{430} & 259  &  & 430       & \textbf{430}    & 20     \\
6                         & 153                  & 116   & \textbf{135} & 4    &  & 135       & \textbf{135}    & 12     \\
7                         & 189.5               & 130   & \textbf{158} & 28   &  & 158       & \textbf{158}    & 15     \\
8                         & 266                 & 207   & \textbf{252} & 167  &  & 252       & \textbf{252}    & 14     \\
9                         & 384                  & 233   & \textbf{273} & 251  &  & 272.67    & \textbf{273}    & 29     \\
10                        & 476                  & 313   & \textbf{363} & 448  &  & 363       & \textbf{363}    & 58     \\
11                        & 168                  & 111   & \textbf{129} & 8    &  & 129       & \textbf{129}    & 33     \\
12                        & 144                  & 76    & \textbf{86}  & 657  &  & 86        & \textbf{86}     & 39     \\
13                        & 224                 & 136   & 154          & 3600 &  & 153.25    & \textbf{154}    & 109    \\
14                        & 326                  & 242   & \textbf{298} & 264  &  & 298       & \textbf{298}    & 49     \\
15                        & 474                & 368   & \textbf{466} & 265  &  & 466       & \textbf{466}    & 30     \\
16                        & 252                  & 196   & \textbf{228} & 39   &  & 228       & \textbf{228}    & 131    \\
17                        & 334                  & 224   & 266          & 3600 &  & 265.33    & \textbf{266}    & 427    \\
18                        & 456                  & 296   & 333          & 3600 &  & 325       & \textbf{325}    & 401    \\
19                        & 796                 & 592   & 703          & 3600 &  & 700       & \textbf{700}    & 220    \\
20                        & 798                  & 550   & 648          & 3600 &  & 640       & \textbf{640}    & 547    \\
21                        & 323                 & 276.5 & \textbf{313} & 10   &  & 313       & \textbf{313}    & 466    \\
22                        & 516                  & 436   & \textbf{479} & 505  &  & 478.75    & \textbf{479}    & 1724   \\
23                        & 502                  & 396   & 459          & 3600 &  & 450       & \textbf{450}    & 777    \\
24                        & 850                  & 537   & 649          & 3600 &  & 590.5     & 592             & 3600   \\
25                        & 1046                 & 712   & 798          & 3600 &  & 791       & \textbf{791}    & 663    \\
26                        & 238                  & 166.5 & \textbf{192} & 173  &  & 192       & \textbf{192}    & 3255   \\
27                        & 326                  & 193   & 214          & 3600 &  & 213       & \textbf{213}    & 1267   \\
28                        & 372                  & 236   & 268          & 3600 &  & 261       & \textbf{261}    & 3411   \\
29                        & 718                  & 571   & 654          & 3600 &  & 647       & 648             & 3600   \\
30                        & 583                  & 316   & 417          & 3600 &  & 342       & \textbf{342}    &     2738   \\ \hline
\end{tabular}
\caption{Results for the Cplex and $B\&P$ on benchmark instances}
\label{TabResults}
\end{table}

The objective of the second experiment is to evaluate the benefit obtained by considering multiple units and assess the efficiency of the $B\&P$ algorithm to solve NRPMU instances. For there are no published solutions to tackle it, we compare our algorithm with a CIPSolver, Cplex. The integer programming model input into Cplex is obtained by extending the model from section 2.2 to consider a set of nurses. Cplex was executed with default settings and one hour time limit. The DP algorithm used by the $B\&P$ to conduct experiments is DPPI. Results are reported in Table \ref{TabResults}. For each instance and each method, the lower bound (LB), the upper bound (UB), and the computational time (Time) are provided. Note that the LB of Cplex stands for the linear relaxation value of corresponding integer programming model while the LB of the $B\&P$ represents the objective value of the MP at the root node of the branch and bound tree. The column Single reports the optimal objective obtained by solving each instance by assigning only nurses with preferred skill to corresponding units.
The \textbf{bold} values in the UB column represent that their optimality is proven by the corresponding optimizer.

As shown in Table 7, considering multiple units brings significant benefits to constructing high-quality rosters. According to our instances, the objective value in column UB of the $B\&P$ decreases by an average of $19.5\%$ compared to that in column Single.
Regarding solutions for the NRPMU, the $B\&P$ algorithm yields a lower bound that is tighter than or equal to that obtained by Cplex on all instances. For instances with two weeks, both Cplex and $B\&P$ succeeds in solving 14 instances out of 15 to optimality. The $B\&P$ algorithm consumes less computational time on 12 instances. For instances with four weeks, $B\&P$ is able to find 13 optimal solutions compared to 4 for Cplex.  $B\&P$ obtains better solutions on two instances that cannot be solved to optimality by either method. For instances 13 and 17, Cplex generates solutions with the optimal objective, but could not prove their optimality due to weak lower bounds. Apparently, with the increase in the number of nurses, units, and weeks, the proposed $B\&P$ algorithm  has demonstrated its potential to generate better solutions.

\section{Conclusion}
\label{sec_conclusion}
In this article, we study a new version of NRPs, NRPMU, which features multiple units and a number of soft time-related constraints. To solve it efficiently, an exact algorithm based on $B\&P$ is presented. How to tackle the pricing subproblem in the CG approach for the NRP has always been a challenging problem since various soft constraints are involved and existing DP algorithms suitable for VRPs cannot be directly applied. Based on ideas from CP constraints to model rostering problems, we define new labels and dominating rules specific to NRPMU and NRPs. Moreover, several strategies are developed to improve the performance of the proposed DP and $B\&P$ algorithms. Computational experiments performed on a variety of instances with different sizes demonstrate that the DP algorithm proposed in this study is efficient and the $B\&P$ algorithm used to solve the NRPMU is competitive.

 \bibliographystyle{elsarticle-harv}  
 \bibliography{cas-refs}





\end{document}